\documentclass[11pt]{amsart}

\usepackage{amsmath}
\usepackage{amsfonts}
\usepackage{amssymb}
\usepackage{amsthm}
\usepackage{amsrefs}
\usepackage{graphicx}
\usepackage{pb-diagram}
\usepackage{epstopdf}
\usepackage{amscd}
\usepackage{pdfpages}
\usepackage{bbm}
\usepackage{multirow}
\usepackage[mathscr]{eucal}
\usepackage{epsfig,epsf,epic}
\usepackage{pdfsync}
\usepackage{enumerate}
\usepackage{etex}
\usepackage[all]{xy}
\usepackage{fancyref}
\usepackage{mathtools}
\usepackage{scalerel,stackengine}
\usepackage{comment}
\usepackage{hyperref}

\stackMath

\ExecuteOptions{dvips}

\addtolength{\textwidth}{+4cm} \addtolength{\textheight}{+2cm}
\hoffset-2cm \voffset-1cm \setlength{\parskip}{5pt}
\setlength{\parskip}{5pt}

\newtheorem{theorem}{Theorem}[section]

\newtheorem{definition}[theorem]{Definition}

\newtheorem{lemma}[theorem]{Lemma}
\newtheorem{remark}[theorem]{Remark}

\numberwithin{equation}{section}

\DeclareMathOperator{\trace}{tr}

\newcommand{\real}{\mathbb{R}}
\newcommand{\comp}{\mathbb{C}}

\newcommand{\inte}{\mathbb{Z}}

\newcommand{\dd}[1]{\frac{\partial}{\partial #1}}

\newcommand{\half}{\frac{1}{2}}

\newcommand{\pdpd}[3]{\frac{\partial^2 #1}{\partial #2\partial #3}}


\newcommand{\liealg}{\mathfrak{g}}
\newcommand{\lietorus}{\mathfrak{t}}
\newcommand{\subalg}{\mathfrak{h}}
\newcommand{\adjact}{\text{Ad}}
\newcommand{\moment}{\mu}
\newcommand{\rootspace}{\mathcal{R}}
\newcommand{\weylchamb}{\mathcal{C}_+}
\newcommand{\weightlattice}{\Lambda}
\newcommand{\gqrep}{\beta}

\begin{document}
\title[Toeplitz Quantization of coadjoint orbits]{Geometric quantization and quantum moment maps on coadjoint orbits and K\"ahler-Einstein manifolds}	
	
\author[Leung]{Naichung Conan Leung}
\address{The Institute of Mathematical Sciences and Department of Mathematics\\ The Chinese University of Hong Kong\\ Shatin \\ Hong Kong}
\email{leung@math.cuhk.edu.hk}
	
\author[Li]{Qin Li}
\address{Southern University of Science and Technology, Shenzhen, China}
\email{liqin@sustech.edu.cn}

\author[Ma]{Ziming Nikolas Ma}
\address{The Institute of Mathematical Sciences and Department of Mathematics\\ The Chinese University of Hong Kong\\ Shatin \\ Hong Kong}
\email{nikolasming@outlook.com}

\begin{abstract}
Deformation quantization and geometric quantization on K\"ahler manifolds give the mathematical description of the algebra of quantum observables and the Hilbert spaces respectively, where the later forms a  representation of quantum observables asymptotically via Toeplitz operators. When there is a Hamiltonian $G$-action on a K\"ahler manifold, there are associated symmetries on both the quantum algebra and representation aspects. We show that in nice cases of coadjoint orbits and K\"ahler-Einstein manifolds, these symmetries are strictly compatible (not only asymptotically). 
\end{abstract}

\maketitle

\section{Introduction}\label{sec:introduction}

Quantizing a classical mechanical system on $\left( X=T^{\ast }\mathbb{R}^{n},\omega =\sum dx^{j}\wedge dp_{j}\right) $ is amount to treating position $x^{j}$ and moment $p_{j}$ as operators $\hat{x}^{j}=x^{j}\cdot $ and $\hat{p}_{j}=i\hslash \frac{\partial }{\partial x^{j}}$ on $L^{2}\left( \mathbb{R}^{n}\right) $ respectively, thus realizing the uncertainty principle $\left[ \hat{x}^{j},\hat{p}_{k}\right] =i\hslash\delta _{k}^{j}$. In fact, this gives an action of $C^{\infty }\left( X\right)[[\hslash]] $ on $L^{2}\left( \mathbb{R}^{n}\right) $ with a non-commutative algebra structure $\star _{\hslash}$ on $C^{\infty }\left( X\right) \left[ \left[ \hslash\right] \right] $. $L^{2}\left( 
\mathbb{R}^{n}\right) $ can also be realized as $\Gamma _{L^{2}}\left(X,L\right) \cap Ker\left( \nabla_{\mathcal{P}_{\mathbb{R}}}\right) $, the space of $L^{2}$-sections of the trivial line bundle $L$, equipped with a unitary connection $\nabla$ with curvature $-i\omega $, over $X$ which are constant along the real polarization $\mathcal{P}_{\mathbb{R}}=\left\langle \frac{\partial }{\partial p_{j}}\text{'s}\right\rangle $, namely those sections which are independent of $p_{j}$'s.

Quantizing a physical system should be independent of the choice of polarizations. If we use the complex polarization $\mathcal{P}_{\mathbb{C}}=\left\langle \frac{\partial }{\partial \bar{z}_{j}}\text{'s}\right\rangle $ on $X=\mathbb{C}^{n}$ with $z_{j}=x^{j}+ip_{j}$, then 
\[
\Gamma _{L^{2}}\left( X,L\right) \cap Ker\left( \nabla_{\mathcal{P}_{\mathbb{C}%
}}\right)=\Gamma _{L^{2}}\left( X,L\right) \cap Ker(\bar{\partial}) =H_{L^{2}}^{0}\left( X,L\right) 
\]
is the space of $L^{2}$-holomorphic sections of the trivial line bundle $L$. This is a representation of polynomials on $X=\mathbb{C}^n$ under the Wick product.  Explicitly, $z_j$ and $\bar{z}_j$ act as operators $z_j \cdot$ and $\hslash \dd{z_j}$ respectively. This action can also be obtained via the Toeplitz operators 
\begin{align*}
	T : C^{\infty }\left( X\right) &\rightarrow End\left( H_{L^{2}}^{0}\left(
	X,L\right) \right)  \\
	f &\mapsto T_{f}=\Pi \circ M_{f},
\end{align*}%
which is given by the multiplication $M_f$ followed by the orthogonal projection $\Pi :\Gamma _{L^{2}}\left( X,L\right) \rightarrow H_{L^{2}}^{0}\left(X,L\right)$ with respect to the volume form $i^n\cdot e^{-|z|^2/\hslash}dz^1d\bar{z}^1\cdots dz^nd\bar{z}^n$ on $\mathbb{C}^n$. By turning $\hslash$ to a formal variable, we obtain the star product for smooth functions on $\mathbb{C}^n$.

This construction can be generalized to quantization of K\"ahler manifolds. In the rest of this paper, we will let $\left( X,\omega,J \right)$ denote a compact K\"ahler manifold with prequantum line bundle $(L,\nabla)$ satisfying $\nabla^2=-i\omega$. The Hilbert spaces are holomorphic sections $H^0(X,L^m)$ of positive tensor powers of $L$. 
Given $f \in C^{\infty}(X)$, one can take multiplication operator $M_{f} : L^2(X,L^m) \rightarrow L^2(X,L^m)$ giving a representation of commutative algebra $C^{\infty}(X)$ on $L^2(X,L^m)$. To obtain an action on $H^0(X,L^m)$ one takes the projections to define Toeplitz operator $T^{(m)} : C^{\infty}(X) \rightarrow \text{End}(H^0(X,L^m))$ as
\begin{equation}\label{eqn:toeplitz_definition}
T^{(m)}_f:= T^{(m)}(f) := \Pi \circ M_f,
\end{equation}
where $\Pi : L^2(X,L^m) \rightarrow H^0(X,L^m)$ denotes the orthogonal projection to holomorphic sections. The Toeplitz operators is closed under composition only in the asymptotic sense as $\hslash = \frac{1}{m} \rightarrow 0$.
By the result of \cite{bordemann1994toeplitz}, there is a sequence of bidifferential operators $C_l : C^{\infty}(X) \times C^{\infty}(X) \rightarrow C^{\infty}(X)$ such that for any $f ,g \in C^{\infty}(X)$ there is an asymptotic expansion as $m \rightarrow \infty$
\begin{equation}
T^{(m)}_f \circ T^{(m)}_g \sim \sum_{l \geq 0} T^{(m)}_{C_l(f,g)} m^{-l},
\end{equation}
with $C_{0} = f\cdot g$ the ordinary product and $C_1(f,g)-C_1(g,f) = \{f,g\}$ the Poisson bracket (\cite{schlichenmaier2010berezin}*{Theorem 4.5.}). Furthermore, these operators $C_l$'s can be put together into an associative $\star$-product via the formula
\begin{equation}
f \star g := \sum_{l}  \hslash^{l}\cdot C_l(f,g),
\end{equation}
as a deformation quantization $(C^{\infty}(X)[[\hslash]],\star)$ which is known as the Berezin-Toeplitz star product. The non-commutative algebra $\left( C^{\infty }\left( X\right)[[\hslash]] ,\star\right) $, the Hilbert space $\mathcal{H}_m=H^0(X,L^m) $ and the operators $T_f^{(m)}$'s are called the deformation quantization, geometric quantization and Berezin-Toeplitz quantization respectively.


The quantization of $X$ is a much more complicated, but important question, especially in representation theory. As when $\left( X,\omega,J \right)$ has a Hamiltonian $G$-symmetry with moment map $\mu :X\rightarrow \liealg^{\ast }\text{,}$ then its geometric quantization $\mathcal{H}:=H^0(X,L)$ would be a $G$-representation $G\rightarrow GL\left( \mathcal{H}\right)$. This representation is itself very important, a closely related question is the Guillemin-Sternberg conjecture on symmetry commutes with geometric quantization, which there has been extensive studies, e.g. \cites{Guillemin-Sternberg,Ma-Zhang,Tian-Zhang,Vergne}. The associated Lie algebra representation is denoted as 
\begin{equation}\label{equation: G-representation-Hilbert-space}
\gqrep : \liealg \rightarrow gl(\mathcal{H}).
\end{equation}
And one expects most interesting representations should come from such a
quantization process. 

By pulling back linear functions on $\liealg$ via $\mu $, we have%
\[
\mu ^{\ast }:\liealg \rightarrow C^{\infty }\left( X\right) \text{.}
\]%
Composing with the Toeplitz operator  $T:C^{\infty }\left( X\right)
\rightarrow gl\left( \mathcal{H}\right) $, we obtain a map%
\[
-i T\circ \mu ^{\ast }:\liealg\rightarrow gl\left( \mathcal{H}\right) \text{.}
\]%
It is natural to compare this with the representation $ \gqrep$. It turns out that they do not agree with each other even in the case of $X$ being coadjoint orbits, unless we replace $\moment$ by the Karabegov moment map $\tilde{\moment}$. 

\begin{definition}\label{def:karabegov_moment_map}
	A map $\tilde{\moment} :X \rightarrow \liealg^*$  is called a Karabegov moment map if it is the moment map associated to the $2$-form $\tilde{\omega}$ given by 
	\begin{equation}\label{eqn:defining_karabegov_form}
	\tilde{\omega}:= \omega + i\cdot\text{Ric}_X,
	\end{equation} 
	i.e. $\tilde{\moment}$ is $G$-equivariant and satisfies $\iota_{v} \tilde{\omega} = d\tilde{\moment}^*(v)$ for all $v \in \liealg$\footnote{We use the same notation for an element in $\mathfrak{g}$ and its associated vector field on $X$.}. 
\end{definition}

The main result of this paper is that in nice cases such as coadjoint orbits or Hamiltonian K\"ahler-Einstein manifolds the representation $\gqrep$ can be obtained via the composition of the Toeplitz quantization $T = T^{(1)}$ with Karabegov moment map $\tilde{\moment}$.

\begin{theorem}[= Theorem \ref{thm:coadjoint_orbit}]
	If $X=O_{\xi }$ is an integral coadjoint orbit of $G$, then 
	\[
	-i T\circ \tilde{\moment}^{\ast }=\gqrep.
	\]%
\end{theorem}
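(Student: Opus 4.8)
The plan is to prove the stronger operator identity $-i\,T_{\tilde{\moment}^{\ast}(v)} = \gqrep(v)$ for every $v \in \liealg$, working directly at level $m=1$ on the finite-dimensional space $\mathcal{H} = H^0(X,L)$. Both sides are endomorphisms of $\mathcal{H}$: the coadjoint action is holomorphic and preserves $H^0(X,L)$, so $\gqrep(v)$ is an endomorphism, while $-i\,T_{\tilde{\moment}^{\ast}(v)} = -i\,\Pi \circ M_{\tilde{\moment}^{\ast}(v)}$ maps $\mathcal{H}$ to $\mathcal{H}$ by construction. Hence it suffices to compare their Hermitian pairings, i.e.\ to show $\langle \gqrep(v)s, t\rangle = \langle -i\,T_{\tilde{\moment}^{\ast}(v)}s, t\rangle$ for all $s,t \in \mathcal{H}$. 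On the Toeplitz side this pairing collapses: since $t$ is holomorphic and $\Pi$ is the orthogonal projection onto $\mathcal{H}$, one has $\langle -i\,T_{\tilde{\moment}^{\ast}(v)}s,t\rangle = -i\langle \tilde{\moment}^{\ast}(v)\, s, t\rangle$, an honest integral over $X$ of the (complex-valued) function $\tilde{\moment}^{\ast}(v)$ against $h(s,t)$, where $h$ is the Hermitian metric on $L$.

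Next I would express $\gqrep$ through the prequantum data. For an integral coadjoint orbit the canonical $G$-equivariant structure on $(L,\nabla)$ coincides with the prequantum lift determined by $\moment$, so Kostant's prequantization formula gives $\gqrep(v) = \nabla_{v} - i\,\moment^{\ast}(v)$ acting on sections of $L$. Writing the fundamental vector field as $v = v^{1,0} + v^{0,1}$ and using that $s \in \mathcal{H}$ is holomorphic, so that $\nabla_{v^{0,1}} s = \bar{\partial}_{v^{0,1}} s = 0$, this restricts on $\mathcal{H}$ to $\gqrep(v) = \nabla_{v^{1,0}} - i\,\moment^{\ast}(v)$. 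The zeroth-order term contributes exactly $-i\,T_{\moment^{\ast}(v)}$ after projection, so the entire content of the theorem is to identify the first-order piece $\Pi \circ \nabla_{v^{1,0}} \circ \Pi$ with the Ricci correction that distinguishes $\tilde{\moment}$ from $\moment$.

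The heart of the argument is an integration by parts. Compatibility of $\nabla$ with $h$ together with $\nabla_{v^{0,1}} t = 0$ gives $\langle \nabla_{v^{1,0}} s, t\rangle = \int_X v^{1,0}\big(h(s,t)\big)\,\tfrac{\omega^n}{n!}$, and Stokes' theorem applied to the top-degree form $h(s,t)\,\omega^n/n!$ turns this into $-\langle \operatorname{div}(v^{1,0})\, s, t\rangle$, where the divergence is taken with respect to the Liouville volume $\omega^n/n!$; equivalently $\Pi \circ \nabla_{v^{1,0}} \circ \Pi = -\,T_{\operatorname{div}(v^{1,0})}$ on $\mathcal{H}$. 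The key K\"ahler identity is that, for the holomorphic field $v^{1,0}$, differentiating $\operatorname{div}_{\omega^n}(v^{1,0}) = \partial_i (v^{1,0})^i + (v^{1,0})^i \partial_i \log\det(g_{k\bar{l}})$ and using holomorphicity yields $\partial_{\bar{j}}\operatorname{div}(v^{1,0}) = -R_{i\bar{j}}(v^{1,0})^i$, which is precisely $-\partial_{\bar j}\moment_{\text{Ric}}^{\ast}(v)$ for the $G$-equivariant moment map $\moment_{\text{Ric}}$ of the Ricci form $\text{Ric}_X$ appearing in $\tilde\omega$. Thus $\operatorname{div}_{\omega^n}(v^{1,0}) = -\moment_{\text{Ric}}^{\ast}(v)$ (up to a constant, addressed below) and $\Pi \circ \nabla_{v^{1,0}} \circ \Pi = T_{\moment_{\text{Ric}}^{\ast}(v)}$. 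Assembling the two pieces and using $\tilde{\moment}^{\ast}(v) = \moment^{\ast}(v) + i\,\moment_{\text{Ric}}^{\ast}(v)$ from Definition \ref{def:karabegov_moment_map} gives
\[ \gqrep(v) = T_{\moment_{\text{Ric}}^{\ast}(v)} - i\,T_{\moment^{\ast}(v)} = -i\,T_{\moment^{\ast}(v) + i\,\moment_{\text{Ric}}^{\ast}(v)} = -i\,T_{\tilde{\moment}^{\ast}(v)}. \]

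The step I expect to be the main obstacle is the matching of additive constants. The identity $\operatorname{div}_{\omega^n}(v^{1,0}) = -\moment_{\text{Ric}}^{\ast}(v)$ holds a priori only up to a constant, since the two sides differ by a holomorphic, hence (on the compact $X$) constant, function; the integration by parts forces $\int_X \operatorname{div}(v^{1,0})\,\omega^n/n! = 0$, and I must check that this agrees with the $G$-equivariant normalization of $\moment_{\text{Ric}}$ built into $\tilde{\moment}$. This is exactly where the coadjoint-orbit hypothesis enters: the volume form $\omega^n/n!$ is $G$-invariant by homogeneity, so the average $\int_X \moment_{\text{Ric}}\,\omega^n/n! \in \liealg^{\ast}$ is $\mathrm{Ad}^{\ast}$-invariant, and for (the semisimple part of) the compact group $G$ the space of invariant functionals vanishes, forcing the constant to be zero. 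I would treat this normalization carefully, and separately confirm that the representation $\gqrep$ really is the prequantum lift so that Kostant's formula applies; once the conventions for $\text{Ric}_X$ and the sign in $\gqrep$ are fixed, the remaining curvature bookkeeping is routine.
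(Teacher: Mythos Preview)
Your argument is correct and takes a genuinely different route from the paper. The paper proceeds via Tuynman's formula (Lemma~\ref{lem:tuynman_formula}), which reduces the problem to computing $\Delta(\moment^{\ast}(v))$; it then establishes a general formula for the Laplacian on $G/G_{\xi}$ (Lemma~\ref{lem:explicit_laplacian}) and carries out an explicit root-space computation showing $\Omega(\xi)=-4\delta_{\xi}$, so that $-\tfrac12\Delta(f_{w,\xi}) = 2f_{w,\delta_{\xi}}$ is exactly the Ricci correction in $\tilde{\moment}_{\xi}$. Your approach bypasses Tuynman and the Laplacian entirely: you integrate $\nabla_{v^{1,0}}$ by parts to obtain $-T_{\operatorname{div}(v^{1,0})}$, and then identify $\operatorname{div}_{\omega^n}(v^{1,0})$ with the moment map for $\text{Ric}_X$ via the K\"ahler identity $\bar{\partial}\operatorname{div}(v^{1,0}) = -\iota_{v^{1,0}}\text{Ric}$ for holomorphic $v^{1,0}$.

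What each buys: the paper's computation is more hands-on and delivers extra information, namely the explicit eigenvalue of $\Delta$ on the moment-map functions, at the cost of setting up the Laplacian on a non-naturally-reductive homogeneous space. Your argument is shorter, uses only elementary K\"ahler geometry, and---as you essentially observe---does not actually use the coadjoint-orbit hypothesis beyond the standing assumption (\S\ref{sec:recall_coadjoint_orbit}) that $\liealg$ has no abelian factor, which is what kills the additive constant. In other words, your proof simultaneously handles Theorems~\ref{thm:general_kahler_einstein_thm} and~\ref{thm:coadjoint_orbit} and in fact shows the identity for any Hamiltonian action of a compact semisimple group on a compact K\"ahler manifold with prequantum line bundle. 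One small point worth recording explicitly when you write it up: since $v$ is Killing, $\operatorname{div}(v)=0$ forces $\operatorname{div}(v^{1,0})$ to be purely imaginary, which is consistent with the paper's convention $\text{Ric}_X = -iF_K$ (so that $\moment_{\text{Ric}}^{\ast}$ is itself purely imaginary); keeping this straight will make the ``routine curvature bookkeeping'' you flag go through cleanly.
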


We show that this is also true for any K\"ahler-Einstein manifolds with $G$-symmetry.

\begin{theorem}[=Theorem \ref{thm:general_kahler_einstein_thm}]
	If $X$ is a compact K\"ahler-Einstein manifold with $G$-symmetry, then 
	\[
	-i T\circ \tilde{\moment}^{\ast }=\gqrep.
	\]
\end{theorem}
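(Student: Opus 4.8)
The plan is to verify the operator identity $-iT\circ\tilde{\moment}^{\ast}=\gqrep$ on the finite-dimensional Hilbert space $\mathcal{H}=H^{0}(X,L)$ by comparing matrix elements against holomorphic sections, thereby reducing it to a pointwise identity between $\tilde{\moment}$ and the divergence of a holomorphic vector field. Fix $v\in\liealg$ and write the generating real vector field as $v=\xi+\bar{\xi}$, where $\xi$ is its $(1,0)$-part, a holomorphic vector field since the $G$-action preserves $J$. First I would invoke Kostant's prequantization formula, which with the conventions fixed above (consistent with $\nabla^{2}=-i\omega$) expresses the infinitesimal action as $\gqrep(v)=\nabla_{v}-i\,\moment^{\ast}(v)$ on sections of $L$; this preserves $\mathcal{H}$ because the action is holomorphic. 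For $s\in\mathcal{H}$ one has $\nabla_{\bar{\xi}}s=\bar{\partial}_{\bar{\xi}}s=0$, so $\nabla_{v}s=\nabla_{\xi}s$ and hence $\gqrep(v)s=\nabla_{\xi}s-i\,\moment^{\ast}(v)s$, which is again holomorphic.

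Next I would compute matrix elements. For $s,t\in\mathcal{H}$, since $t$ is holomorphic the projection drops out and $\langle -iT_{\tilde{\moment}^{\ast}(v)}s,\,t\rangle=-i\langle \tilde{\moment}^{\ast}(v)\,s,\,t\rangle$, so it suffices to match this with $\langle \gqrep(v)s,\,t\rangle$. Compatibility of $\nabla$ with the Hermitian metric $h$ together with $\bar{\partial}t=0$ gives the pointwise relation $h(\nabla_{\xi}s,t)=\xi\cdot h(s,t)$; integrating over the compact $X$ against $\tfrac{\omega^{n}}{n!}$ and applying Stokes' theorem to $\mathcal{L}_{\xi}\big(h(s,t)\tfrac{\omega^{n}}{n!}\big)$ yields
\[
\langle \nabla_{\xi}s,\,t\rangle=-\int_{X}(\operatorname{div}\xi)\,h(s,t)\,\tfrac{\omega^{n}}{n!}=-\langle (\operatorname{div}\xi)\,s,\,t\rangle,
\]
where $\operatorname{div}\xi$ is defined by $\mathcal{L}_{\xi}\tfrac{\omega^{n}}{n!}=(\operatorname{div}\xi)\tfrac{\omega^{n}}{n!}$. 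Since $\gqrep(v)s$ is holomorphic it equals its own projection, and combining the above reduces the theorem to the single functional identity $\tilde{\moment}^{\ast}(v)=\moment^{\ast}(v)-i\operatorname{div}\xi$. Writing $\rho:=i\,\mathrm{Ric}_{X}$ for the (real) Ricci form and $\moment_{\rho}$ for its moment map, so that $\tilde{\moment}^{\ast}(v)=\moment^{\ast}(v)+\moment_{\rho}^{\ast}(v)$, this is equivalent to $\moment_{\rho}^{\ast}(v)=-i\operatorname{div}\xi$.

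The heart of the argument is this last identity, which I would prove by a local Kähler computation. In holomorphic coordinates with $\tfrac{\omega^{n}}{n!}=G\cdot(\text{coordinate volume})$ and $G=\det(g_{i\bar{j}})$, one finds $\operatorname{div}\xi=\partial_{i}\xi^{i}+\xi^{i}\partial_{i}\log G$; since $\xi$ is holomorphic, $\bar{\partial}(\partial_{i}\xi^{i})=0$, and using $\rho=-i\partial\bar{\partial}\log G$ one gets $\bar{\partial}(\operatorname{div}\xi)=i\,\iota_{\xi}\rho$. On the other hand $\iota_{\xi}\rho=\bar{\partial}\moment_{\rho}^{\ast}(v)$ is the $(0,1)$-part of the defining equation $\iota_{v}\rho=d\moment_{\rho}^{\ast}(v)$. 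Hence $-i\operatorname{div}\xi$ and $\moment_{\rho}^{\ast}(v)$ have the same $\bar{\partial}$, so they differ by a constant on the connected compact manifold $X$; this constant vanishes after integration, because $\int_{X}\operatorname{div}\xi\,\tfrac{\omega^{n}}{n!}=0$ by Stokes and the average $\int_{X}\moment_{\rho}\,\tfrac{\omega^{n}}{n!}$ lies in $(\liealg^{\ast})^{G}$ by $G$-equivariance. For Kähler–Einstein $X$ one has $\rho=\lambda\omega$, so $\moment_{\rho}=\lambda\moment$ up to an invariant constant, which makes this normalization transparent.

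I expect the main obstacle to be precisely the matching of the constants, i.e. showing that the \emph{canonical} moment map $\moment$ entering Kostant's formula and the equivariant Karabegov moment map $\tilde{\moment}$ are normalized compatibly so that the integration constant is genuinely zero. This is exactly where the Kähler–Einstein hypothesis $\rho=\lambda\omega$ (and, in the previous theorem, the coadjoint/Kirillov–Kostant–Souriau structure) is used, together with reductivity of the relevant symmetry group to force the $G$-invariant average to vanish. A secondary, purely bookkeeping obstacle is to track the sign and factor-of-$i$ conventions throughout, so that the final scalar is exactly $-i$ rather than merely some nonzero constant.
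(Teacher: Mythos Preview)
Your argument is correct and follows a genuinely different route from the paper's. The paper proceeds by citing Tuynman's identity $\Pi\circ\nabla_{X_f}=\tfrac{i}{2}\,T_{\Delta f}$ and then invoking Matsushima's theorem, which on a K\"ahler--Einstein manifold forces each Hamiltonian $f_w$ to be a Laplace eigenfunction with eigenvalue $-2\lambda$; these two ingredients combine to give $-iT\big((1+\lambda)f_w\big)=Q_{f_w}=\gqrep(w)$. You instead bypass both cited results: an elementary integration by parts replaces $\nabla_{\xi}$ by multiplication by $-\operatorname{div}\xi$ on matrix elements (this is essentially the content of Tuynman's lemma, re-derived), and a short local computation identifies $-i\operatorname{div}\xi$ as a Ricci moment map. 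A pleasant by-product is that your identity $\gqrep(v)=-iT\big(\moment^{\ast}(v)-i\operatorname{div}\xi_v\big)$ holds on \emph{any} compact K\"ahler manifold with Hamiltonian $G$-symmetry, with $\moment^{\ast}(v)-i\operatorname{div}\xi_v$ always a Hamiltonian for $v$ with respect to $\tilde{\omega}$; the K\"ahler--Einstein hypothesis, which the paper uses essentially via Matsushima, enters your argument only to recognize this particular Karabegov moment map as $(1+\lambda)\moment$. Your worry about the additive constant is legitimate but not a gap: once the lift of $G$ to $L$ is fixed (determining both $\gqrep$ and the Kostant $\moment$), your computation manufactures the specific $\tilde{\moment}$ for which the identity is exact, and in the K\"ahler--Einstein case this visibly coincides with $(1+\lambda)\moment$. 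The sign-and-factor bookkeeping you flag is the only place requiring care, and you have handled it correctly.
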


\begin{remark}
	The above assumption implies that the Einstein constant is positive and the $G$-symmetry is Hamiltonian. 
\end{remark}

Our main theorems say the following diagram commutes:
\begin{equation}\label{equation: commutative-diagram}
\xymatrix
{ & C^\infty(X) \ar[dr]^{T}  & \\
	\mathfrak{g}\hspace{1.5mm} \ar[ur]^{-i\cdot\tilde{\mu}^*}\ar[rr]_{\beta} & & gl(\mathcal{H}),
}
\end{equation}
which means the compatibility between the quantum symmetries. The map $\beta$ describes the infinitesimal quantum symmetry on $\mathcal{H}$. The left arrow describes the ``quantum symmetry'' on the algebra of observables. The quantum moment map $ \tilde{\moment}^*$ has close relation with the Fedosov quantization scheme will be explained in details in \S \ref{sec:quantization}.


\section*{Acknowledgement}
N. C. Leung was supported by grants of the Hong Kong Research Grants Council (Project No. CUHK14301117 \& CUHK14303518) and direct grants from CUHK.
Q. Li was supported by Guangdong Basic and Applied Basic Research Foundation (Project No. 2020A1515011220) and National Science Foundation of China (Project No. 12071204).

\section{Main theorems}\label{sec:main_theorem}
In this section, we prove our theorems by differential geometric and Lie theoretic computations.

We take the following convention: For every smooth function $f$ on $X$, the Hamiltonian vector field $X_f$ is defined by $\iota_{X_f}(\omega)=df$, and there is
$$
\{f,g\}:=\omega(X_g,X_f).
$$
This convention guarantees that the map $f\mapsto X_f$ is a Lie algebra homomorphism:
$$
X_{\{f,g\}}=[X_f,X_g].
$$

\subsection{Hamiltonian K\"ahler-Einstein manifold}

When $(X,\omega,J)$ is K\"ahler-Einstein with $i\text{Ric}_X =\lambda \omega$, Karabegov moment map is given by $\tilde{\moment} = (1 +\lambda) \moment$.

\begin{lemma}[Tuynman \cite{tuynman1987quantization}, see also \cite{bordemann1991gl}*{Proposition 4.1.}]\label{lem:tuynman_formula}
	Let $(X,\omega,J,L)$ be K\"ahler manifold with a pre-quantium line bundle, then we have 
	$$
	\Pi \circ \nabla_{X_f} = \frac{i}{2}\Pi \circ T_{\Delta(f)}
	$$
	acting on holomorphic sections $H^0(X,L)$, where $\Delta$ is the Laplacian with respect to the K\"ahler metric.
\end{lemma}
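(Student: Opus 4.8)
The plan is to test the asserted operator identity against holomorphic sections in the $L^2$-inner product and reduce everything to a single integration by parts governed by a Kähler identity. First I would decompose the Hamiltonian vector field into its types, $X_f = X_f^{1,0} + X_f^{0,1}$. Since any $s \in H^0(X,L)$ satisfies $\bar\partial s = 0$, contracting the $(0,1)$-field $X_f^{0,1}$ into $\nabla s$ only sees the $(0,1)$-part $\nabla^{0,1}s = \bar\partial s = 0$, so $\nabla_{X_f^{0,1}} s = 0$ and hence $\nabla_{X_f} s = \nabla_{X_f^{1,0}} s$ on holomorphic sections. Thus it suffices to prove $\Pi\circ\nabla_{X_f^{1,0}} = \tfrac{i}{2}\,\Pi\circ T_{\Delta(f)}$.

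Next, since both sides land in $H^0(X,L)$ after applying $\Pi$, it is enough to compare their $L^2$-pairings with an arbitrary $t \in H^0(X,L)$: because $t$ is holomorphic, $\langle \Pi\,\eta, t\rangle = \langle \eta, t\rangle$ and $\langle T_{\Delta(f)}s, t\rangle = \langle \Delta(f)\cdot s, t\rangle$, so the claim becomes the scalar identity $\langle\nabla_{X_f^{1,0}}s, t\rangle = \tfrac{i}{2}\langle\Delta(f)\cdot s, t\rangle$. Writing $\psi := h(s,t)$ for the pointwise Hermitian pairing (a scalar function) and using compatibility of the Chern connection with $h$ together with $\nabla_{X_f^{0,1}}t = 0$ (holomorphy of $t$, which kills the conjugate-slot term), I would obtain the pointwise identity $h(\nabla_{X_f^{1,0}}s, t) = X_f^{1,0}\cdot\psi$. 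This is the step that removes the derivative from $s$ at the cost of differentiating the scalar $\psi$.

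Then I would integrate over the compact $X$ with $dV = \omega^n/n!$ and integrate by parts, so that Stokes gives $\int_X (X_f^{1,0}\cdot\psi)\,dV = -\int_X \psi\,\mathrm{div}_{dV}(X_f^{1,0})\,dV$. In local holomorphic coordinates, with $X_f^{1,0} = a^j\partial_{z^j}$ and $a^j = -i\,g^{j\bar k}\partial_{\bar z^k}f$ read off from $\iota_{X_f}\omega = df$, the divergence is $\mathrm{div}_{dV}(X_f^{1,0}) = G^{-1}\partial_{z^j}(G\,a^j)$ with $G = \det(g_{l\bar m})$. The crucial Kähler identity $G^{-1}\partial_{z^j}(G\,g^{j\bar k}) = 0$, which follows from the Kähler symmetry $\partial_{z^j}g_{l\bar m} = \partial_{z^l}g_{j\bar m}$, collapses this to $\mathrm{div}_{dV}(X_f^{1,0}) = -i\,g^{j\bar k}\partial_{z^j}\partial_{\bar z^k}f = -\tfrac{i}{2}\Delta(f)$, the last equality recording the normalization $\Delta = 2\,g^{j\bar k}\partial_{z^j}\partial_{\bar z^k}$ of the Kähler Laplacian. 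Substituting back yields $\langle\nabla_{X_f^{1,0}}s, t\rangle = \tfrac{i}{2}\int_X \Delta(f)\,\psi\,dV = \tfrac{i}{2}\langle\Delta(f)\cdot s, t\rangle$, which is exactly what was needed.

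I expect the main obstacle to be the divergence computation: verifying the Kähler identity $G^{-1}\partial_{z^j}(G\,g^{j\bar k})=0$ and pinning down all sign and normalization conventions — the definition of $X_f$ via $\iota_{X_f}\omega = df$, the type decomposition, the conjugate-linearity of $h$ in its second slot, and the normalization of $\Delta$ — so that the constant comes out to be exactly $\tfrac{i}{2}$. Everything else, namely the type reduction, the compatibility identity, and Stokes' theorem on the compact $X$, is routine once these conventions are fixed.
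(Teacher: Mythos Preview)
The paper does not prove this lemma; it is quoted from the literature (Tuynman \cite{tuynman1987quantization}, Bordemann et al.\ \cite{bordemann1991gl}) and used as a black box in the proofs of Theorems \ref{thm:general_kahler_einstein_thm} and \ref{thm:coadjoint_orbit}. So there is no ``paper's own proof'' to compare against.

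Your argument is correct and is essentially the standard one appearing in those references: reduce to the $(1,0)$-part of $X_f$ using holomorphy of $s$, pass the derivative to the scalar $\psi=h(s,t)$ via metric compatibility of the Chern connection (the conjugate-slot term dies because $\overline{X_f^{1,0}}=X_f^{0,1}$ and $t$ is holomorphic), integrate by parts on the compact $X$, and identify $\mathrm{div}_{dV}(X_f^{1,0})$ with $-\tfrac{i}{2}\Delta f$ via the K\"ahler identity $G^{-1}\partial_{z^j}(G\,g^{j\bar k})=0$. Your own caveat about conventions is apt: the paper uses the analyst's sign for $\Delta$ (so that $\Delta f_w=-2\lambda f_w$ in the K\"ahler--Einstein case), matching your normalization $\Delta=2g^{j\bar k}\partial_{z^j}\partial_{\bar z^k}$; with that fixed the constant $\tfrac{i}{2}$ comes out exactly.
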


\begin{theorem}\label{thm:general_kahler_einstein_thm}
	Let $(X,\omega,J,L)$ be K\"ahler manifold with a pre-quantium line bundle together with a $G$-symmetry. Suppose further that $X$ is K\"ahler-Einstein, then we have
	$$
	\gqrep =  -i T \circ \tilde{\moment}^*.
	$$ 
\end{theorem}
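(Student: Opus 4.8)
The plan is to reduce the operator identity to a single eigenvalue computation for the Laplacian acting on the components of the moment map, using Kostant's formula for the infinitesimal action together with Tuynman's Lemma \ref{lem:tuynman_formula}.

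First I would write down the infinitesimal geometric-quantization representation explicitly. Since $L$ is a $G$-equivariant prequantum bundle, Kostant's formula expresses the derivative of the lifted action on sections as
\begin{equation*}
\gqrep(v) = -\nabla_{v} - i\,\moment^{\ast}(v), \qquad v \in \liealg,
\end{equation*}
where the zeroth-order term is precisely the moment Hamiltonian $\moment^{\ast}(v)$; this formula is what simultaneously pins down the normalization of $\moment$ that must be used. Because $G$ acts holomorphically, $\gqrep(v)$ preserves $\mathcal{H} = H^0(X,L)$, so acting on holomorphic sections we may insert the orthogonal projection and obtain
\begin{equation*}
\gqrep(v) = -\Pi\circ\nabla_{v} - i\,\Pi\circ M_{\moment^{\ast}(v)} = -\Pi\circ\nabla_{v} - i\,T_{\moment^{\ast}(v)}.
\end{equation*}

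Next, by the defining property $\iota_{v}\omega = d\moment^{\ast}(v)$ and the paper's convention $\iota_{X_f}\omega = df$, the vector field $v$ is exactly the Hamiltonian vector field $X_{\moment^{\ast}(v)}$. Hence Tuynman's formula applies directly with $f = \moment^{\ast}(v)$, giving
\begin{equation*}
\Pi\circ\nabla_{v} = \frac{i}{2}\,\Pi\circ T_{\Delta(\moment^{\ast}(v))} = \frac{i}{2}\,T_{\Delta(\moment^{\ast}(v))},
\end{equation*}
using $\Pi^2=\Pi$. The geometric heart of the argument is then the eigenvalue identity
\begin{equation*}
\Delta\big(\moment^{\ast}(v)\big) = 2\lambda\,\moment^{\ast}(v),
\end{equation*}
valid on a K\"ahler-Einstein manifold with $i\,\text{Ric}_X = \lambda\omega$: because $v$ is a holomorphic Killing field, $\text{grad}^{1,0}\moment^{\ast}(v)$ is a holomorphic vector field, and a Bochner/Matsushima-type computation turns this together with the Einstein condition into the stated eigenfunction equation (with the Laplacian normalized as in Lemma \ref{lem:tuynman_formula}). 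Substituting and using $\tilde{\moment} = (1+\lambda)\moment$, hence $\tilde{\moment}^{\ast}(v) = (1+\lambda)\,\moment^{\ast}(v)$, together with the linearity $T_{cf} = c\,T_f$, gives
\begin{equation*}
\gqrep(v) = -\tfrac{i}{2}\,T_{2\lambda\,\moment^{\ast}(v)} - i\,T_{\moment^{\ast}(v)} = -i(1+\lambda)\,T_{\moment^{\ast}(v)} = -i\,T_{\tilde{\moment}^{\ast}(v)},
\end{equation*}
which is the claimed identity $\gqrep = -i\,T\circ\tilde{\moment}^{\ast}$.

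The main obstacle I expect is establishing the eigenvalue identity $\Delta(\moment^{\ast}(v)) = 2\lambda\,\moment^{\ast}(v)$ with exactly the right constant and no additive term. Two points need care: (i) the Bochner/Matsushima argument itself, extracting the eigenfunction equation from holomorphicity of $v$ and $i\,\text{Ric}_X = \lambda\omega$; and (ii) checking that the moment-map normalization forced by Kostant's formula coincides with the zero-average normalization for which the equation carries no constant term, so that no stray multiple of the identity operator survives. Beyond this, the argument is mostly bookkeeping, but the various factors of $i$ and signs in Kostant's formula, in the factor $\tfrac{i}{2}$ of Tuynman's Lemma, and in the Laplacian convention must all be tracked consistently for the final constant $(1+\lambda)$ to come out correctly.
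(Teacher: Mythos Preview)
Your proposal is correct and is essentially the paper's own argument: the paper likewise invokes Matsushima's theorem for the eigenvalue identity (written there with the opposite Laplacian sign convention, $\Delta f_w = -2\lambda f_w$) and Tuynman's Lemma \ref{lem:tuynman_formula} to reduce everything to the Kostant prequantization operator $Q_f = \Pi\circ(\nabla_{X_f} - i f)$, which it then identifies with $\gqrep$ by checking the bracket relation directly. The only difference is direction of the computation---you run from $\gqrep$ toward $-iT\circ\tilde{\moment}^*$, while the paper runs the other way---and the zero-average normalization issue you flagged is not addressed in the paper either.
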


\begin{proof}
	For any $w \in \liealg$,  let $f_{w}:= \langle \moment , w\rangle$ denote the pullback of $w$ via the moment map $\mu$. $J X_{f_{w}} = \nabla f_{w}$ is a holomorphic Killing vector field on $(X,\omega,J)$, and using the theorem by Matsushima (see e.g. \cite{ballmann2006lectures}*{Theorem 6.16.}) there is a bijection between eigenspace $E_{2\lambda}$ of $\Delta$ and space of holomorphic Killing vector fields by taking their gradient vector fields. That is $\Delta f_{w} = -2 \lambda f_{w}$. 
	
	Notice $\langle \tilde{\moment}, w \rangle  = (1+ \lambda) f_{w}$, and therefore using the Lemma \ref{lem:tuynman_formula} we have
	$$
	-i T \circ \tilde{\moment}^* = -i T \big( f_{w} -\half  \Delta (f_{w})   \big) =Q_f,
	$$
		where $Q_{f} = \Pi \circ (\nabla_{X_f} -i  f)$. Finally we have $[Q_{f_{w}} , Q_{f_{v}} ] = Q_{\{f_{w},f_{v}\}} = Q_{f_{[w,v]}}$ due to the fact that the vector field $X_{f_{v}}$ comes from the $G$-action which preserves the K\"ahler structure $(\omega,J)$, and therefore we obtain the representation $\gqrep$ of $\liealg$ on $H^0(X,L)$ by sending $w \mapsto Q_{f_{w}}$. 
	\end{proof}

\subsection{Coadjoint orbits}\label{sec:recall_coadjoint_orbit}
In this section, we recall the notions and results for coadjoint orbits that are necessary for the proof of our second main theorem. 

\subsubsection{Coadjoint orbits as symplectic manifolds}
We consider a compact connected Lie group $G$ with a fixed choice of maximal torus $T$. Without loss of generality, we assume that $\liealg$ has no Abelian factor. We let $\liealg^*$ be the dual of $\liealg$ equipped with coadjoint action $\adjact^*$. We let the non-degenerated negative definite Killing form on $\liealg$ to be $\kappa(\cdot , \cdot)$ which identifies $\liealg$ and $\liealg^*$, and we will abuse our notation by treating $\kappa$ as a pairing on $\liealg^*$ as well. 

We denote $X := \mathcal{O}_{\xi}$ the coadjoint orbit through $\xi \in \liealg^*$, which can be identified with $G / G_{\xi}$ where $G_{\xi}$ denotes the stabilizer subgroup of $\xi$ with Lie subalgebra $\liealg_{\xi}$. $X$ is equipped with the natural Kirillov-Kostant-Souriau symplectic form (KKS form in short) given by the formula $\omega_{\xi}(\eta_1,\eta_2):= \langle \xi, [\eta_1,\eta_2] \rangle$, for $\eta_1,\eta_2 \in \liealg/\liealg_{\xi} \cong T_{o}(G / G_{\xi})$ at the identity $o \in G/G_{\xi}$, where $\langle \cdot,\cdot \rangle$ is the natural pairing between $\liealg$ and $\liealg^*$. The natural $G$-action $G \curvearrowright X = G / G_{\xi}$ is Hamiltonian and its moment map is given by the natural embedding $\moment_{\xi} : \mathcal{O}_{\xi} \hookrightarrow \liealg^*$. 

\subsubsection{Root space decomposition}\label{sec:root_space}
We let $\liealg_{\comp} = \liealg \otimes_{\real} \comp$ denote the complexification of $\liealg$ and similarly for $\subalg_{\comp} = \subalg\otimes_{\real} \comp$ for any subalgebra $\subalg \subset \liealg$. We let $\rootspace$ to be the set of complex roots $\alpha \in i \lietorus^* \subset \lietorus^*_{\comp}$, where $\lietorus$ is the Lie-algebra of $T$. We further choose a set of simple roots $\mathcal{S} = \{ \alpha_1,\dots,\alpha_r\}$ which specifies a decomposition $\rootspace = \rootspace_+ \sqcup \rootspace_-$ into positive and negative roots, as well as a choice of fundamental Weyl chamber $\weylchamb$. We have the root space decomposition $\liealg_{\comp} = \lietorus_{\comp} \oplus \bigoplus_{\alpha \in \rootspace_+} (\liealg_{\alpha} \oplus \liealg_{-\alpha})$. Given a root $\alpha$, we define the co-root $H_{\alpha} \in \lietorus_{\comp}$ to $\alpha$ by taking $\kappa(H_{\alpha},\cdot):= \frac{2\alpha}{\kappa(\alpha,\alpha)}$, which is the unique element in $[\liealg_{\alpha},\liealg_{-\alpha}]$ satisfying $\alpha(H_{\alpha}) = 2$. 

For computations on the homogeneous space $X = G/G_{\xi}$, we further introduce the notation $\rootspace_{\xi}:= \{ \alpha \in \rootspace \ | \ \kappa(\xi,\alpha) = 0 \}$ and the set of complementary root $\rootspace_{\xi}^{c} := \rootspace \setminus \rootspace_{\xi}$ as in \cite{arvanitogeorgos2003introduction}, and similarly $\rootspace_{\xi,+}  = \rootspace_{\xi} \cap \rootspace_+$ and $\rootspace_{\xi,+}^{c} := \rootspace_+ \setminus \rootspace_{\xi,+}$. Notice that $\rootspace_{\xi}$ and $\rootspace_{\xi,+}$ only depends on the smallest closed strata containing $\xi$ in the fundamental Weyl chamber $\weylchamb$ from its definition.

\subsubsection{K\"ahler structures on coadjoint orbits}
For any $\xi\in\liealg^*$, letting $\xi^{\vee} \in \liealg$ be the corresponding element via the identification by $\kappa$, we have $\liealg_{\xi} = \lietorus \oplus \bigoplus_{\alpha \in \rootspace_{\xi,+}} (\liealg_{\alpha} \oplus \liealg_{-\alpha}) \cap \overline{(\liealg_{\alpha} \oplus \liealg_{-\alpha})}$. Therefore, we have the natural identification $\liealg / \liealg_{\xi} = \liealg_{\xi}^{c}:= \bigoplus_{ \alpha \in \rootspace_{\xi,+}^{c}}(\liealg_{\alpha} \oplus \liealg_{-\alpha}) \cap \overline{(\liealg_{\alpha} \oplus \liealg_{-\alpha})}$, together with a direct sum decomposition $\liealg = \liealg_{\xi} \oplus \liealg_{\xi}^c$. There is a unique  $G$-invariant integrable complex structure $J$ on $\mathcal{O}_{\xi}$ with 
$$
T^{1,0}_e (G / G_{\xi}) = \bigoplus_{\alpha \in \rootspace_{\xi,+}^{c}} \liealg_{\alpha}, \ \ \ \  T^{0,1}_e (G / G_{\xi}) = \bigoplus_{\alpha \in \rootspace_{\xi,+}^{c}} \liealg_{-\alpha}.
$$
This $J$ together with $\omega_{\xi}$ gives a $G$-invariant K\"ahler structure on $X = \mathcal{O}_{\xi}$. We write $g_\xi$ for the metric tensor associated to this K\"ahler structure.  

\subsubsection{Pre-quantum line bundles on coadjoint orbits} 
Without loss of generality, we take $\xi \in i\weylchamb$ with integral symplectic class $[\omega_{\xi}] \in H^2(X,2\pi \inte)$. This allows us to define the associated unitary line bundle $L_{\xi}$ on $X = G / G_{\xi}$ equipped with a unitary connection $\nabla^{\xi}$ on $L_{\xi}$ such that the curvature form $(\nabla^{\xi})^2 =-i \omega_{\xi}$. This gives a pre-quantum line bundle $(L_{\xi},\nabla^{\xi})$ on $X$ with a natural $G$-equivariant structure given by left translations. 

\subsection{Toeplitz quantization on coadjoint orbits}\label{sec:toeplitz_on_coadjoint_orbit}
Using the results from \cites{bordemann1986homogeneous, borel1958characteristic, borel1959characteristic, borel1960characteristic}, the Ricci form $\text{Ric}_{\xi} = -iF_{K}$ of $X$ can be computed and is given by 
\begin{equation}\label{eqn:ricci_2_form}
\text{Ric}_{\xi}(X,Y) = \langle \delta_{2\xi},[X,Y] \rangle,
\end{equation} 
for $X,Y \in \liealg/\liealg_{\xi} \cong T_{o}(G / G_{\xi})$, where $\delta_{\xi} = \sum_{\alpha \in \rootspace_{\xi}^{c} } \half \alpha$. 
As a consequence, we find that $X$ is K\"ahler-Einstein with Einstein constant $\lambda$ exactly when $\xi = \frac{2}{\lambda}\delta_{\xi}$. For $X$ equipped with other K\"ahler metrics, it would be interesting to ask whether the identity in Theorem \ref{thm:general_kahler_einstein_thm} still hold for these metrics. 

In the case of coadjoint orbit $X = G/G_{\xi}$, the Karabegov form in equation \eqref{eqn:defining_karabegov_form} is explicitly given by $\tilde{\omega}_{\xi} =  \omega_{\xi} + \omega_{2\delta_{\xi}}$. The Karabegov moment map is given by $\tilde{\moment}_{\xi} = \moment_{\xi} + \moment_{2\delta_\xi}$, where $\moment_{\xi}$ and $\moment_{2\delta_\xi}$ are moment map on $G/G_{\xi}$ with respect to symplectic form $\omega_{\xi}$ and $\omega_{2\delta_{\xi}}$ respectively.

\subsubsection{Peter-Weyl theorem}\label{sec:Peter_weyl_theorem}


Using the Peter-Weyl theorem, we have a decomposition of the space of complex valued square integrable functions 
\begin{equation*}
L^2(G) = \overline{\sum_{\rho \in \hat{G}} V^*_{\rho} \otimes V_\rho}
\end{equation*}
as $G \times G$ modules, where $\hat{G}$ is the set of irreducible representations of $G$. Explicitly, an element $w \otimes v$ is treated as a function given by $f_{w,v}(g):= \langle w, \rho(g)(v) \rangle$, where $\langle \cdot,\cdot \rangle$ is the natural pairing between $V_\rho^*$ and $V_{\rho}$. Taking the right $G_{\xi}$-invariant part we obtain the corresponding decomposition
\begin{equation}
L^2(X) = \overline{\sum_{\rho \in \hat{G}} V^*_{\rho} \otimes V_\rho^{G_{\xi}}}
\end{equation} 
as left $G$-modules, where $V_\rho^{G_{\xi}}$ refers to the subspace fixed by $G_\xi$. We consider the Laplace-Beltrami operator $\Delta = dd^* + d^*d$ acting on smooth function $C^{\infty}(X)$, which is invariant under the left $G$-action and hence acting on the individual component $V^*_{\rho} \otimes V_\rho^{G_{\xi}}$. This action is computed explicitly in \cite{yamaguchi1979spectra} for the K\"ahler-Einstein metric, we modify its proof for the K\"ahler metric $g_{\xi}$. We take a basis $e_1,\dots,e_l$ of $\liealg_{\xi}^c \cong T_o(G/G_{\xi})$, and obtain a local coordinate near $[g] \in G/G_{\xi}$ by the following map:
\begin{equation}\label{eqn:local_exponential_coordinates}
(x_1,\dots,x_l) \mapsto [g\cdot\exp(x_1e_1 + \cdots + x_l e_l)]
\end{equation}
for every $g \in G$. Here $\exp$ denotes the Lie-theoretic exponential map, and notice that this may not coincide with the Riemannian exponential map because the K\"ahler metric $g_{\xi}$ is not naturally reductive metric in general.

\begin{lemma}\label{lem:explicit_laplacian}
        There exists a linear map $\Omega:V_\rho\rightarrow V_\rho$, such that for the function $f_{w,v}$ on $G/G_\xi$, we have $\Delta f_{w,v} = f_{w,\Omega\cdot v}$. Explicitly, $\Omega$ is given by
	$$
	\Omega (v) = \sum_{i,j=1}^l a^{ij} d\rho(e_i) \circ d\rho(e_j) (v),
	$$
	where $a_{ij}:= g_{\xi,o}(e_i,e_j)$'s are the matrix coefficients associated to metric $g_{\xi,o}$ on $\liealg_{\xi}^c$ at identity $o \in G/G_{\xi}$, and $a^{ij}$'s are their inverse matrix coefficients. Here $d\rho$ denotes the Lie algebra representation associated to $\rho$.
	\end{lemma}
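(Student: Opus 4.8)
The plan is to compute $\Delta$ explicitly in the chart \eqref{eqn:local_exponential_coordinates} and read off the answer from its principal symbol, after disposing of the first‑order terms by a symmetry argument. First I would reduce to the origin $o=[e]$: since both $\Delta$ and the metric $g_\xi$ are $G$‑invariant, and the chart centered at $[g]$ is the left‑$g$‑translate of the chart centered at $o$, the metric coefficients $g_{jk}(x)$ and all quantities derived from them are independent of $g$. Hence it suffices to prove $\Delta f_{w,v}=f_{w,\Omega\cdot v}$ at $o$ and transport it by $G$‑invariance. Writing the Laplace–Beltrami operator on functions as the connection Laplacian, in coordinates it reads $\Delta F = -\sum_{i,j} g^{ij}\big(\partial_i\partial_j F - \Gamma^k_{ij}\,\partial_k F\big)$, so it splits into a principal (second‑order) part and a first‑order part $\tau:=\sum_{i,j} g^{ij}\Gamma^k_{ij}\,\partial_k$.

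\textbf{Principal symbol.} In the chart at $o$ we have $F(x)=\langle w,\rho(\exp(x\cdot e))v\rangle=\langle w,\exp\!\big(\sum_i x_i\,d\rho(e_i)\big)v\rangle$. Expanding the exponential gives $\partial_k F(0)=f_{w,\,d\rho(e_k)v}$ and $\partial_j\partial_k F(0)=\langle w,\tfrac12\big(d\rho(e_j)d\rho(e_k)+d\rho(e_k)d\rho(e_j)\big)v\rangle$. Contracting the Hessian with the symmetric matrix $a^{ij}=g^{ij}(o)$ allows one to drop the symmetrization, and the principal part of $\Delta f_{w,v}$ is identified with $f_{w,\Omega\cdot v}$ for $\Omega\cdot v=\sum_{i,j}a^{ij}\,d\rho(e_i)d\rho(e_j)v$, the overall sign being that of our convention for $\Delta$. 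Note that $\Omega$ is $d\rho$ of the element $\sum_{i,j}a^{ij}e_ie_j$ of the universal enveloping algebra of $\liealg_\comp$; since the metric on $\liealg_{\xi}^c$ is $\adjact(G_\xi)$‑invariant, this element is $\adjact(G_\xi)$‑invariant, so $\Omega$ commutes with $\rho(G_\xi)$ and preserves $V_\rho^{G_\xi}$, ensuring $f_{w,\Omega\cdot v}$ descends to $X$.

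\textbf{The main obstacle: vanishing of the first‑order part.} The delicate point, precisely because $g_\xi$ need not be naturally reductive and \eqref{eqn:local_exponential_coordinates} need not be Riemannian normal coordinates, is that $\tau$ does not automatically vanish at $o$. Rather than compute the Christoffel symbols, I would argue by isotropy invariance. Any $h\in G_\xi$ acts on the chart at $o$ by the \emph{linear} map $\adjact(h)|_{\liealg_{\xi}^c}$, because $h\exp(x\cdot e)h^{-1}=\exp(\adjact(h)(x\cdot e))$, $h\cdot o=o$, and $\adjact(G_\xi)$ preserves the complement $\liealg_{\xi}^c$. Under a linear coordinate change the inhomogeneous term in the transformation law of the Christoffel symbols vanishes, so $\sum_{i,j}g^{ij}\Gamma^k_{ij}$ transforms as a genuine $(1,0)$‑tensor; thus $\tau|_o$ is a well‑defined vector in $T_oX=\liealg_{\xi}^c$ that is fixed by the isotropy representation of $G_\xi$, in particular by $T\subseteq G_\xi$. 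But $T$ acts on $\liealg_{\xi}^c$ with weights exactly the (nonzero) roots in $\rootspace_{\xi}^c$, so it has no nonzero fixed vector; hence $\tau|_o=0$.

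\textbf{Conclusion.} Combining the two computations, the first‑order contribution to $\Delta f_{w,v}$ drops out at $o$, leaving $\Delta f_{w,v}=f_{w,\Omega\cdot v}$ there, and by the homogeneity reduction this identity holds at every point $[g]$ with the same $\Omega$. I expect the exponential expansion and the principal‑symbol identification to be routine; the genuinely delicate step is the vanishing of $\tau$, where the isotropy/weight argument lets us avoid an explicit — and, in the non‑naturally‑reductive setting, unpleasant — computation of the connection coefficients.
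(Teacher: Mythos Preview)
Your argument is correct and establishes the lemma, but by a genuinely different route than the paper. The principal-symbol step is essentially the same: the paper records the symmetrized second derivative as $d\rho(e_i)d\rho(e_j)-\tfrac12 d\rho([e_i,e_j])$ and kills the bracket term using the symmetry of $a^{ij}$, just as you do. The divergence is in how the first-order term is disposed of. Rather than invoke isotropy invariance, the paper computes the connection explicitly: via the Baker--Campbell--Hausdorff formula and the standard formula for the Levi--Civita connection on a reductive homogeneous space, it identifies $dl_{g^{-1}}\big((\nabla_{\partial_{x_i}}\partial_{x_j})|_g\big)$ with the symmetric bilinear map $U(e_i,e_j)\in\liealg_\xi^c$ determined by $2g_{\xi,o}(U(e_i,e_j),e_k)=g_{\xi,o}([e_k,e_i]_{\liealg_\xi^c},e_j)+g_{\xi,o}(e_i,[e_k,e_j]_{\liealg_\xi^c})$, and then checks directly that $\sum_{i,j}a^{ij}g_{\xi,o}(U(e_i,e_j),e_k)=\trace(\text{ad}(e_k)|_{\liealg_\xi^c})=0$. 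Your isotropy/weight argument (the vector $\tau|_o$ is fixed by $T\subset G_\xi$, but $T$ acts on $\liealg_\xi^c$ with only nonzero weights) is cleaner and sidesteps the somewhat delicate identification of $\nabla_{\partial_{x_i}}\partial_{x_j}$ with $U$ in the non-naturally-reductive setting; the paper's explicit computation, by contrast, produces the connection in these coordinates as a byproduct and shows concretely that the obstruction is a trace of an adjoint block, which vanishes by unimodularity together with $[\liealg_\xi,\liealg_\xi^c]\subset\liealg_\xi^c$.
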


\begin{proof}
	With the identification $dl_g: \liealg_{\xi}^{c} \cong T_{g} (G/G_{\xi})$ via left translation $l_g : X \rightarrow X$, we use the exponential coordinates from equation \eqref{eqn:local_exponential_coordinates}, and compute $\big( \nabla_{\dd{x_i}} \dd{x_j} \big)|_{g} \in \liealg_{\xi}^{c}$ for arbitrary $i,j$. Using the Baker-Campbell-Hausdorff formula we notice that we can write $g \cdot \exp(x_i e_i + x_j e_j) = g \cdot \exp(x_j e_j - \frac{x_i x_j}{2}[e_j,e_i] + o(\|x\|^2)) \exp(x_i e_i)$, where $\| x\|$ refers to the norm on the tangent space $T_{o} (G/G_{\xi})$ given by $g_{\xi,o}$. We have 
	$$
 	dl_{g^{-1}}\big( \dd{x_j}\Bigg|_{g \exp(x_i e_i)} \big) = \big( e_j - \frac{x_i }{2}[e_j,e_i] + o(\|x\|) \big)^{\#}|_{\exp(x_i e_i)} ,
	$$
	where $X^{\#}$ refers to the vector field generated by the left action for $X \in \liealg$. We have 
	$$dl_{g^{-1}}\big( (\nabla_{\dd{x_i}} \dd{x_j})|_{g} \big) = \big(\nabla_{e_i} ( e_j - \frac{x_i }{2}[e_j,e_i] )^{\#}\big)_o = \big(\nabla_{e_i} e_j^{\#} \big)_o + \half [e_i,e_j]^{\#}$$
	 at $o \in G/G_{\xi}$. Making use of the formula \cite{arvanitogeorgos2003introduction}*{Proposition 5.2.} we have that \begin{equation}\label{eqn:lemma_calcuation}
	 dl_{g^{-1}}\big( (\nabla_{\dd{x_i}} \dd{x_j})|_{g} \big)  = U(e_i,e_j),
	 \end{equation} 
	 where $U(\cdot,\cdot):\liealg_{\xi}^{c} \times \liealg_{\xi}^{c} \rightarrow \liealg_{\xi}^{c}$ defined by $2 g_{\xi,o}(U(e_i,e_j),e_k) = g_{\xi,o}([e_k,e_i]_{\liealg_{\xi}^{c}},e_j) + g_{\xi,o}(e_i,[e_k,e_j]_{\liealg_{\xi}^{c}})$ (Here $X_{\liealg_{\xi}^{c}}$ refers to the component of $X \in \liealg$ in $\liealg_{\xi}^{c}$ with respect to the direct sum decomposition $\liealg = \liealg_{\xi} \oplus \liealg_{\xi}^{c}$ ).
	
	Therefore, we use the formula $\Delta f = \sum_{i,j}g^{ij}\big(\dd{x_i} (\dd{x_j} f) - (\nabla_{\dd{x_i}} \dd{x_j}) (f) \big)$ in local coordinates to compute $\Delta f_{w,v}$ at the point $g \in G/G_{\xi}$. We have 
	\begin{align*}
	\dd{x_i} (\dd{x_j} f) &= \pdpd{}{x_i}{x_j} (f(g\cdot\exp(x_ie_i + x_j e_j))) = \pdpd{}{x_i}{x_j} \big\langle w ,\rho(g) \big( \rho( \exp(x_ie_i + x_j e_j)) (v) \big)\big\rangle \\
	& = \big\langle w, \rho(g) \big(d\rho(e_i) (d\rho(e_j)(v)) - \half d\rho([e_i,e_j])(v) \big) \big\rangle.
	\end{align*}
	Using eariler calculation from \eqref{eqn:lemma_calcuation} we get $\big((\nabla_{\dd{x_i}}\dd{x_j})f \big)|_{g} = \langle w, \rho(g) \big(d\rho(U(e_i,e_j))(v)\big) \rangle$, we obtain 
	\begin{align*}
	2\sum_{i,j}a^{ij} g_{\xi,o}(U(e_i,e_j),e_k) =&\sum_{i,j} a^{ij} g_{\xi,o}([e_k,e_i]_{\liealg_{\xi}^{c}},e_j) +\sum_{i,j} a^{ij} g_{\xi,o}(e_i,[e_k,e_j]_{\liealg_{\xi}^{c}}) \\
	=&2 \trace(\text{ad}(e_k)|_{\liealg_{\xi}^{c}}) = 0.
	\end{align*}
	Combining with the fact that $a^{ij}[e_i,e_j] = 0$ we obtain the desired identity.
	
\end{proof}

With the Karabegov moment map $\tilde{\moment}_{\xi} : X \rightarrow \liealg^*$, we have a natural embedding $\tilde{\moment}_{\xi}^* : \liealg \rightarrow \liealg \otimes (\liealg^*)^{G_{\xi}}  \hookrightarrow L^2(X)$ as a left $G$-submodule which is given by 
\begin{equation}\label{eqn:twisted_moment_map_explicit}
\tilde{\moment}_{\xi}(g) = \langle w,  \adjact^*(g) ( \xi + 2\delta_{\xi})  \rangle. 
\end{equation} 
We have the following theorem saying geometric quantization $\gqrep :\liealg \rightarrow  \text{End}(H^0(X,L_{\xi}))$ is given by composition of the Toeplitz quantization with the Karabegov moment map for coadjoint orbits.

\begin{theorem}\label{thm:coadjoint_orbit}
	For an integral coadjoint orbit $(X_{\xi} = G/G_{\xi}, \omega_{\xi},J,L_{\xi})$ with pre-quantium line bundle equipped with the natural Hamiltonian $G$-symmetry by left-translation, we have
	$$
	\gqrep = -i T \circ \tilde{\moment}_{\xi}^*.
	$$
\end{theorem}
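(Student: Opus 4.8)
The plan is to follow the proof of Theorem \ref{thm:general_kahler_einstein_thm} verbatim up to the one point where the Einstein condition was used---the Matsushima identity $\Delta f_w = -2\lambda f_w$---and to replace that input by the explicit spectral computation of Lemma \ref{lem:explicit_laplacian}. As before, for $w \in \liealg$ I write $f_w := \langle \moment_\xi, w\rangle$ for the pullback of $w$ along the moment map and set $Q_f := \Pi \circ (\nabla_{X_f} - i f)$. The Kostant--Souriau formula identifies the infinitesimal $G$-action on $H^0(X,L_\xi)$ with $w \mapsto Q_{f_w}$, so that $\gqrep(w) = Q_{f_w}$; this uses only that the $G$-action preserves the K\"ahler structure and is unchanged from the Einstein case. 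Applying Tuynman's formula (Lemma \ref{lem:tuynman_formula}) on holomorphic sections, where $\Pi\circ T_{\Delta f} = T_{\Delta f}$, gives $Q_f = -i\,T_{f - \half \Delta f}$. Hence the theorem is equivalent to the pointwise identity $\langle \tilde{\moment}_\xi, w\rangle = f_w - \half \Delta f_w$ for all $w$. Since $\tilde{\moment}_\xi = \moment_\xi + \moment_{2\delta_\xi}$, this reduces to proving
$$
\Delta f_w = -2\,\langle \moment_{2\delta_\xi}, w\rangle .
$$

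Next I would realize both sides as matrix coefficients to apply Lemma \ref{lem:explicit_laplacian}. Identifying $\xi$ with $\xi^\vee \in \lietorus \subset \liealg$ via $\kappa$, the function $f_w$ on $X = G/G_\xi$ equals $f_w([g]) = \langle \adjact^*(g)\xi, w\rangle = \kappa(\adjact(g)\xi^\vee, w)$, which is exactly the matrix coefficient $f_{w,\xi^\vee}$ for the adjoint representation $\rho = \adjact$ (with $w$ viewed in the dual through $\kappa$); likewise $\langle \moment_{2\delta_\xi}, w\rangle = f_{w,(2\delta_\xi)^\vee}$. Lemma \ref{lem:explicit_laplacian} then yields $\Delta f_w = f_{w,\Omega(\xi^\vee)}$ with $\Omega(\xi^\vee) = \sum_{i,j} a^{ij}\,[e_i,[e_j,\xi^\vee]]$, where $\{e_i\}$ is a basis of $\liealg_\xi^c$ and the $a^{ij}$ are the inverse metric coefficients at $o$. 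Consequently the desired identity becomes the purely Lie-theoretic statement $\Omega(\xi^\vee) = -2\,(2\delta_\xi)^\vee$.

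To establish this I would take $\{e_i\}$ adapted to the root decomposition $\liealg_\xi^c = \bigoplus_{\alpha \in \rootspace_{\xi,+}^c}(\liealg_\alpha \oplus \liealg_{-\alpha})\cap \overline{(\liealg_\alpha \oplus \liealg_{-\alpha})}$. Since $\xi^\vee$ lies in the Cartan subalgebra, $\text{ad}(\xi^\vee)$ acts diagonally, so $[e_j,\xi^\vee] = -\alpha_j(\xi^\vee)e_j$ for a root vector $e_j$, and only the pairs with $\alpha_i = -\alpha_j$ contribute a Cartan-valued term $[e_i,e_j]$, proportional to $H_\alpha$, to $\Omega(\xi^\vee)$. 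Summing these contributions over $\rootspace_{\xi,+}^c$, weighted by the $a^{ij}$ coming from the KKS metric $g_\xi$, should collapse---after re-expressing the $H_\alpha$ through $\kappa$---to the element dual to $\sum_{\alpha \in \rootspace_\xi^c}\alpha = 2\delta_\xi$, giving the factor $-2(2\delta_\xi)^\vee$; as a consistency check, in the Einstein case $\xi = \tfrac{2}{\lambda}\delta_\xi$ this recovers $\Delta f_w = -2\lambda f_w$. This root-theoretic computation is the crux and the main obstacle: because $g_\xi$ is generally not naturally reductive, the coefficients $a^{ij}$ are not simply diagonal and must be tracked carefully. The saving grace is that the diagonal action of $\text{ad}(\xi^\vee)$ forces the relevant off-diagonal metric terms either to cancel or to combine precisely into the weighted root sum defining $2\delta_\xi$, in close parallel with the Ricci form computation \eqref{eqn:ricci_2_form}.
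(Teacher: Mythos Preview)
Your proposal is correct and follows essentially the same route as the paper: reduce via Tuynman's formula to the Lie-theoretic identity $\Omega(\xi^\vee) = -4\,\delta_\xi^\vee$ and then verify this using a root-adapted basis of $\liealg_\xi^c$. Your concern about off-diagonal metric coefficients is unnecessary: if instead of complex root vectors you take the real basis $\{X_\alpha, Y_\alpha\}_{\alpha \in \rootspace_{\xi,+}^c}$ (so that $X_\alpha \pm i Y_\alpha$ span $\liealg_{\pm\alpha}$), the KKS metric $g_{\xi,o}$ is already diagonal with $\|X_\alpha\|^2 = \|Y_\alpha\|^2 = \xi(iH_\alpha)$, and the computation of $\Omega(\xi^\vee)$ becomes the short direct calculation you sketched.
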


\begin{proof}
	Making use of the Lemma \ref{lem:explicit_laplacian}, we compute $\Omega \cdot \xi$, or equivalently $\Omega \cdot \xi^{\vee}$ via the identification induced by the Killing form. For each $\alpha \in \rootspace_{+}$, we choose triple $iH_{\alpha}, X_{\alpha}, Y_{\alpha}$ lying in $\liealg$ such that 
	\begin{align*}
	[iH_\alpha,X_{\alpha}] & = 2 Y_\alpha\\
	[iH_{\alpha},Y_{-\alpha}] & = -2 X_{\alpha}\\
	[X_{\alpha},Y_{-\alpha}] &=  iH_{\alpha},
	\end{align*}
	where $H_{\alpha} $ is the coroot as in \S \ref{sec:root_space}. Therefore $\{X_{\alpha}, Y_{\alpha}\}_{\alpha \in \rootspace_{\xi}^{c}}$ form a basis for $\liealg_{\xi}^{c}$. From the discussion in \cite[Chapter 3 \S 7]{arvanitogeorgos2003introduction} about the Riemannian metric, we notice that this is an orthogonal basis with $\|X_{\alpha}\|^2 = \|Y_{\alpha}\|^2 =\xi(iH_{\alpha})$. Therefore we have
	\begin{align*}
	\Omega (\xi^{\vee}) &= \sum_{\alpha \in \rootspace_{\xi}^{c}} \xi(iH_{\alpha})^{-1} \big([X_{\alpha},[X_{\alpha}, \xi^{\vee}]]+ [Y_{\alpha},[Y_{\alpha}, \xi^{\vee}]] \big)\\
	& = \sum_{\alpha \in \rootspace_{\xi}^{c}} \xi(iH_{\alpha})^{-1} \alpha(\xi^{\vee}) \big([X_{\alpha},-Y_{\alpha}]+ [Y_{\alpha},X_{\alpha}] \big)\\
	& =  \sum_{\alpha \in \rootspace_{\xi}^{c}} \frac{\kappa(\alpha,\alpha)}{2i} (-2iH_{\alpha}) = -2 \sum_{\alpha \in \rootspace_{\xi}^{c}} \alpha^{\vee}. 
	\end{align*}
	Therefore we have $\Omega (\xi) = -4 \delta_{\xi}$. 
	
	By writing $\langle w , \tilde{\moment}_{\xi} \rangle =   f_{w,\xi} + 2 f_{w,\delta_{\xi}}$ for any $w \in \liealg$, we therefore have
	$$
	-iT\big( \langle w , \tilde{\moment}_{\xi} \rangle \big) = -iT \big(  f_{w,\xi} + 2f_{w,\delta_{\xi}}  \big) = -iT \big( f_{w,\xi} - \half \Delta(f_{w,\xi}) \big)  =Q_{f_{w,\xi}},
	$$
	using Lemma \ref{lem:tuynman_formula}, where $Q_{f} = \Pi \circ ( \nabla_{X_f} - i   f) $ as in \S \ref{sec:quantization}. Making use of the fact that $ [Q_{f_{w,\xi}}, Q_{f_{v,\xi}}] =  Q_{\{f_{w,\xi},f_{v,\xi}\}}$ again as in the proof of Theorem \ref{thm:general_kahler_einstein_thm}, and the fact that $\{f_{w,\xi},f_{v,\xi}\} = f_{[w,v],\xi}$ we obtain the identity $ [Q_{f_{w,\xi}}, Q_{f_{v,\xi}}]  = Q_{f_{[w,v],\xi}}$. As a result, one obtains the Lie algebra representation $\gqrep$ via $w \mapsto Q_{f_{w,\xi}}$. 
	\end{proof}

\section{Quantization via Toeplitz operators}\label{sec:quantization}

In this section, we given an explanation of how $\tilde{\mu}$ is related to quantum moment map in Fedosov deformation quantization.

\subsection{Berezin-Toeplitz quantization with Hamiltonian $G$-action}
\

\subsubsection{Karabegov moment map}\label{subsection: quantum-moment-map}
In this subsection, we explain the definition {\em Karabegov moment map} as a variation of quantum moment map in deformation quantization. Recall that a {\em quantum moment map} is a Lie algebra homomorphism $\mu_\hbar^*:\mathfrak{g}\rightarrow \left(C^\infty(X)[[\hbar]],\frac{i}{\hslash}[\cdot,\cdot]_\star\right)$ , such that for any $v\in\mathfrak{g}$:
$$
v(f)=\frac{i}{\hslash}[\mu_\hbar^*(v),f]_\star. 
$$

\begin{remark}\label{remark: normalization-factor}
	The normalization factor $\frac{i}{\hbar}$ guarantees that $\lim_{\hbar \rightarrow 0} \frac{i}{\hbar}[\cdot,\cdot]_{\star} = \{\cdot,\cdot\}$. With this normalization,  $-i\hbar\cdot\mu_\hslash^*$ is a Lie algebra homomorphism if we take the bracket $[\cdot,\cdot]_\star$ on $C^\infty(X)[[\hbar]]$. In particular, $\mu_\hslash|_{\hslash=0} = \mu$. This also explains the coefficient $-i$ in the commutative diagram \eqref{equation: commutative-diagram} (There we set $\hbar=1$). 

\end{remark}

It is shown in \cites{Gutt-Rawnsley,Muller-Neumaier} that when a deformation quantization is induced by a Fedosov connection $D_F=\nabla^{\mathcal{W}}+\frac{1}{\hslash}[\gamma,-]_\star$ satisfying 
\begin{equation}\label{equation: Fedosov-equation}
\nabla^{\mathcal{W}}\gamma+\frac{1}{\hslash}\gamma\star\gamma=-\omega+\hslash\alpha_1+\hslash^2\alpha_2+\cdots=-\omega+\Omega.
\end{equation}
Then a quantum moment map must satisfy the following equation
\begin{equation}\label{equation: condition-quantum-moment-map}
 \iota_v(\omega-\Omega)=d\mu_\hbar^*(v).
\end{equation}


A deformation quantization on a K\"ahler manifold $X$ is called of Wick type (also known as separation of variables) if all the bi-differential operators $C_l(f,g)$ take holomorphic and anti-holomorphic derivatives of $f$ and $g$ respectively. It is shown in \cite{Karabegov96} that to every Wick type star product, there is an associated closed formal $(1,1)$-form $-\frac{1}{\hslash} \omega + \alpha_1 + \alpha_2 \hslash + \alpha_3 \hslash^2 + \cdots$ known as the {\em Karabegov form}, which gives rise to a one-one correspondence.

In \cite{CLL}, it is shown that there is a family of Fedosov connections induced from quantization of $L_\infty$ structure on K\"ahler manifolds, such that the formal closed $(1,1)$-form $-\omega+\Omega$ in equation \eqref{equation: Fedosov-equation} is exactly the Karabegov form of the associated star product. The Berezin-Toeplitz quantization is a Wick type deformation quantization whose Karabegov form is $-\frac{1}{\hslash} \omega-i\cdot\text{Ric}_X$. Since this formal $(1,1)$-form only has two terms in the $\hslash$ power expansion, we can turn the formal variable to any complex number without the convergence issue. In particular, equation \eqref{equation: condition-quantum-moment-map} with $1/\hslash=1$ gives the definition of Karabegov map (Definition \ref{def:karabegov_moment_map}). 

The formal variable $1/\hslash$ plays the role of the tensor power $m$ of the prequantum line bundle $L$. It is natural to define a family of Karabegov moment maps $\tilde{\mu}_m$ associated to the form $\omega+\frac{i}{m}\cdot\text{Ric}_X,\ m\in\mathbb{N}$. By considering $L^{\otimes m}$ and let $\mathcal{H}_m:=H_{L^2}^0(X,L^{\otimes m})$ when $X$ being coadjoint orbit or compact K\"ahler-Einstein, we have following commutative diagrams: 
\begin{equation*}
\xymatrix
{ & C^\infty(X) \ar[dr]^{T}  & \\
	\mathfrak{g} \ar[ur]^{-i\cdot\tilde{\mu}_m^*}\hspace{2mm}  \ar[rr]_{\beta_m} & & gl(\mathcal{H}_m)
}
\end{equation*}
in these two cases.

For general K\"ahler manifolds, the above diagram only commutes in an asymptotic sense:
$$
\big{|}\big{|}\beta+iT\circ\tilde{\mu}_m^*\big{|}\big{|}=O(m^{-\infty}).
$$
Here $\big{|}\big{|}\cdot \big{|}\big{|}$ denotes the operator norm. Equivalently, for any $k>0$, there exists a $C_k>0$, such that $\big{|}\big{|}\beta+iT\circ\tilde{\mu}_m^*\big{|}\big{|}\leq C_km^{-k}$.

\begin{bibdiv}
\begin{biblist}
\bib{arvanitogeorgos2003introduction}{book}{
  title={An introduction to {L}ie groups and the geometry of homogeneous spaces},
  author={Arvanitoge{\=o}rgos, A.},
  volume={22},
  year={2003},
  publisher={American Mathematical Soc.}
}

\bib{ballmann2006lectures}{book}{
  title={Lectures on K{\"a}hler manifolds},
  author={Ballmann, W.},
  volume={2},
  year={2006},
  publisher={European mathematical society}
}

\bib{bordemann1986homogeneous}{article}{
  title={Homogeneous {K}{\"a}hler manifolds: paving the way towards new supersymmetric sigma models},
  author={Bordemann, M.},
  author={Forger, M.},
  author={R{\"o}mer, H.},
  journal={Communications in Mathematical Physics},
  volume={102},
  number={4},
  pages={605--647},
  year={1986},
  publisher={Springer}
}

 \bib{bordemann1991gl}{article}{
  title={gl($\infty$) and geometric quantization},
  author={Bordemann, M.},
  author={Hoppe, J.}, 
  author={Schaller, P.}, 
  author={Schlichenmaier, M.},
  journal={Communications in Mathematical Physics},
  volume={138},
  number={2},
  pages={209--244},
  year={1991},
  publisher={Springer}
}

\bib{bordemann1994toeplitz}{article}{
  title={Toeplitz quantization of {K}{\"a}hler manifolds and $gl(N)$, $N\rightarrow \infty$ limits},
  author={Bordemann, M.},
  author={Meinrenken, E.},
  author={Schlichenmaier, M.},
  journal={Communications in Mathematical Physics},
  volume={165},
  number={2},
  pages={281--296},
  year={1994},
  publisher={Springer}
}

\bib{borel1958characteristic}{article}{
  title={Characteristic classes and homogeneous spaces, I},
  author={Borel, A.},
  author={Hirzebruch, F.},
  journal={American Journal of Mathematics},
  volume={80},
  number={2},
  pages={458--538},
  year={1958},
  publisher={JSTOR}
}

\bib{borel1959characteristic}{article}{
  title={Characteristic classes and homogeneous spaces, II},
  author={Borel, A.}, 
  author={Hirzebruch, F.},
  journal={American Journal of Mathematics},
  volume={81},
  number={2},
  pages={315--382},
  year={1959},
  publisher={JSTOR}
}

\bib{borel1960characteristic}{article}{
  title={Characteristic classes and homogeneous spaces, III},
  author={Borel, A.},
  author={Hirzebruch, F.},
  journal={American Journal of Mathematics},
  volume={82},
  number={3},
  pages={491--504},
  year={1960},
  publisher={JSTOR}
}

\bib{Chan-Leung-Li}{article}{
   author={Chan, K.},
   author={Leung, N. C.},
   author={Li, Q.},
   title={A geometric construction of representations of the Berezin-Toeplitz quantization},
   eprint={arXiv:2004.00523 [math-QA]},
}

\bib{CLL}{article}{
   author={Chan, K.},
   author={Leung, N. C.},
   author={Li, Q.},
   title={Kapranov's $L_\infty$ structures, Fedosov's star products, and one-loop exact BV quantizations on K\"ahler manifolds},
   eprint={ arXiv:2008.07057 [math-QA]},
}

\bib{CLL3}{article}{
   author={Chan, K.},
   author={Leung, N. C.},
   author={Li, Q.},
   title={Bargmann-Fock sheaves on K\"ahler manifolds},
   eprint={ arXiv:2008.11496 [math-DG]},
}

\bib{Guillemin-Sternberg}{article}{
    AUTHOR = {Guillemin, V.},
    author = {Sternberg, S.},
     TITLE = {Geometric quantization and multiplicities of group
              representations},
   JOURNAL = {Invent. Math.},
    VOLUME = {67},
      YEAR = {1982},
    NUMBER = {3},
     PAGES = {515--538},
}

\bib{Gutt-Rawnsley}{article}{
    AUTHOR = {Gutt, S.},
    author = {Rawnsley, J.},
     TITLE = {Natural star products on symplectic manifolds and quantum
              moment maps},
 JOURNAL = {Letters in Mathematical Physics},
    VOLUME = {66},
      YEAR = {2003},
    NUMBER = {1-2},
     PAGES = {123--139}
}

\bib{Karabegov96}{article}{
    AUTHOR = {Karabegov, A.V.},
     TITLE = {Deformation quantizations with separation of variables on a
              {K}\"{a}hler manifold},
   JOURNAL = {Comm. Math. Phys.},
    VOLUME = {180},
      YEAR = {1996},
    NUMBER = {3},
     PAGES = {745--755},
}

\bib{karabegov2000identification}{article}{
  title={Identification of Berezin-Toeplitz deformation quantization},
  author={Karabegov, A.},
  author={ Schlichenmaier, M.},
  journal={arXiv preprint math/0006063},
  year={2000}
}

\bib{Ma-Ma}{article}{
    AUTHOR = {Ma, X.},
    author = {Marinescu, G.},
     TITLE = {Berezin-{T}oeplitz quantization on {K}\"{a}hler manifolds},
   JOURNAL = {J. Reine Angew. Math.},
    VOLUME = {662},
      YEAR = {2012},
     PAGES = {1--56},

}

\bib{Ma-Zhang}{article}{
    AUTHOR = {Ma, X.},
    author = {Zhang, W.},
     TITLE = {Geometric quantization for proper moment maps: the {V}ergne
              conjecture},
   JOURNAL = {Acta Math.},
    VOLUME = {212},
      YEAR = {2014},
    NUMBER = {1},
     PAGES = {11--57},
}

\bib{Muller-Neumaier}{article}{
    AUTHOR = {M\"{u}ller-Bahns, M. F.},
    author = {Neumaier, N.},
     TITLE = {Some remarks on {$\germ g$}-invariant {F}edosov star products
              and quantum momentum mappings},
    JOURNAL = {Journal of Geometry and Physics},
    VOLUME = {50},
      YEAR = {2004},
    NUMBER = {1-4},
     PAGES = {257--272}
}

\bib{schlichenmaier2012berezin}{article}{
  title={Berezin-{T}oeplitz quantization and star products for compact K{\"a}hler manifolds},
  author={Schlichenmaier, M.},
  journal={Contemp. Math},
  volume={583},
  pages={257},
  year={2012}
}

\bib{schlichenmaier2010berezin}{article}{
  title={Berezin-{T}oeplitz quantization for compact {K}{\"a}hler manifolds. {A} review of results},
  author={Schlichenmaier, M.},
  journal={Advances in Mathematical Physics},
  volume={2010},
  year={2010},
  publisher={Hindawi}
}

\bib{Tian-Zhang}{article}{
    AUTHOR = {Tian, Y.},
    author = {Zhang, W.},
     TITLE = {An analytic proof of the geometric quantization conjecture of
              {G}uillemin-{S}ternberg},
   JOURNAL = {Invent. Math.},
    VOLUME = {132},
      YEAR = {1998},
    NUMBER = {2},
     PAGES = {229--259},
}

\bib{tuynman1987quantization}{article}{
  title={Quantization: {T}owards a comparison between methods},
  author={Tuynman, G. M.},
  journal={Journal of mathematical physics},
  volume={28},
  number={12},
  pages={2829--2840},
  year={1987},
  publisher={American Institute of Physics}
}

\bib{Vergne}{article}{
    AUTHOR = {Vergne, M.},
     TITLE = {Multiplicities formula for geometric quantization. {I}, {II}},
   JOURNAL = {Duke Math. J.},
    VOLUME = {82},
      YEAR = {1996},
    NUMBER = {1},
     PAGES = {143--179, 181--194},
}

\bib{Xu}{article}{
    AUTHOR = {Xu, P.},
     TITLE = {Fedosov {$*$}-products and quantum momentum maps},
  JOURNAL = {Communications in Mathematical Physics},
    VOLUME = {197},
      YEAR = {1998},
    NUMBER = {1},
     PAGES = {167--197}
}

\bib{yamaguchi1979spectra}{article}{
  title={Spectra of flag manifolds},
  author={Yamaguchi, S.},
  journal={Memoirs of the Faculty of Science, Kyushu University. Series A, Mathematics},
  volume={33},
  number={1},
  pages={95--112},
  year={1979},
  publisher={Department of Mathematics, Faculty of Science, Kyushu University}
}

\end{biblist}
\end{bibdiv}

\end{document}